\newtheorem{theorem}{Theorem}[section]
\newtheorem{lemma}[theorem]{Lemma}
\newtheorem{proposition}[theorem]{Proposition}
\newtheorem*{theorem*}{Theorem}
\newtheorem*{proposition*}{Proposition}
\theoremstyle{definition}
\theoremstyle{remark}
\newtheorem{remark}[theorem]{Remark}
\newcommand{\Q}{\mathbb{Q}}
\newcommand{\Z}{\mathbb{Z}}
\newcommand{\R}{\mathbb{R}}
\newcommand{\C}{\mathbb{C}}
\newcommand{\OCp}{\mathcal{O}_{\mathbb{C}_p}}
\newcommand{\Spec}{\mathrm{Spec}}
\newcommand{\Sp}{\mathrm{Sp}}
\newcommand{\an}{\mathrm{an}}
\newcommand{\mdr}{M_{dR}}
\newcommand{\mb}{M_B}
\newcommand{\mdol}{M_{Dol}}
\newcommand{\Pic}{\mathrm{Pic}}
\newcommand{\Picu}{\mathrm{Pic}^\natural_{A^\vee}(\C)}
\newcommand{\Gm}{\mathbb{G}_m}
\newcommand{\Gmcp}{\mathbb{G}_{m,\C_p}}
\newcommand{\rh}{\mathrm{RH}}
\newcommand{\topo}{\mathrm{top}}
\renewcommand{\Re}{\operatorname{Re}}
\DeclareMathOperator{\Hom}{Hom}
\DeclareMathOperator{\Lie}{Lie}
\DeclareMathOperator{\pgl}{PGL}
\DeclareMathOperator{\gl}{GL}
 \newenvironment{itemize*}
  {\begin{itemize}[topsep=-\parskip+\jot,itemsep=-\parskip-\jot]}
  {\end{itemize}}
\newenvironment{enumerate*}
  {\begin{enumerate}[label=(\alph*),topsep=-\parskip+\jot,itemsep=-\parskip-\jot]}
  {\end{enumerate}}
\newenvironment{enumerate**}
  {\begin{enumerate}[label=(\roman*),topsep=-\parskip+\jot,itemsep=-\parskip-\jot]}
  {\end{enumerate}}
\title{Bi-algebraicity in the rank one Riemann-Hilbert correspondence via o-minimality}
\date{\today}
\author{Abhishek Oswal}
\begin{document}
\setlist[description]{font=\normalfont\itshape\textbullet\space}
\maketitle

    
\section{Introduction}\label{introduction}
Let $A$ be a smooth, projective, irreducible, complex algebraic variety. Given a rank $r$ algebraic vector bundle $\mathcal{E}$ with integrable connection $\nabla : \mathcal{E} \rightarrow \mathcal{E} \otimes_{\mathcal{O}_A} \Omega^1_A$ on $A$, the sheaf of flat sections of $\nabla^\an$, $\ker(\nabla^\an)$, is a $\C $-local system of rank $r$ on $A^\an$. The association $(\mathcal{E},\nabla)\mapsto \ker(\nabla^\an)$ provides a rank-preserving, tensor functor from the category of algebraic vector bundles on $A$ equipped with an integrable connection to the category of $\C$-local systems on $A^\an$. This functor is in fact an equivalence of categories, classically known as the Riemann--Hilbert correspondence.

For this note, we shall be primarily concerned with \emph{rank one} local systems. We let $\mdr(A)$ (the rank one de Rham moduli space attached to $A$) denote the moduli space of line bundles on $A$ equipped with an integrable connection and let $\mb(A)$ (called the rank one Betti moduli space attached to $A$) denote the moduli space of rank one $\C$-local systems on $A^\an$.
More precisely, for a test $\C$-scheme $T$, $\mdr(A)(T)$ is the set of isomorphism classes of line bundles $\mathcal{L}$ on $A \times_{\operatorname{Spec}(\C)} T =: A_T$ equipped with a flat connection $\nabla : \mathcal{L}\rightarrow \mathcal{L}\otimes_{\mathcal{O}_{A_T}} \Omega^1_{A_T/T}$ relative to $T$ and $\mb(A)(T)$ is the set $\Hom(\pi_1(A^\an,a),\mathcal{O}_T(T)^\times)$ for a base point $a \in A(\C)$ (however we recall that $\mb(A)$ is nonetheless independent of the chosen base point). We remark that when the rank is greater than one, these moduli spaces are no longer fine moduli spaces. We refer the reader to the papers \cite{simpson1994moduliI} and \cite{simpson1994moduliII} where the construction of such moduli spaces in general rank has been carried out in detail. 

In the rank one case, both $\mb(A)$ and $\mdr(A)$ are naturally algebraic groups over $\C$. On $\C$-points the group operation is that of tensor product of local systems in the case of $\mb$ and that of tensor products of bundles equipped with the natural tensor product connection in the case of $\mdr$. 
The Riemann--Hilbert correspondence provides an isomorphism of complex analytic groups $\rh : \mdr(A)^\an \xrightarrow{\sim} \mb(A)^\an$. However, this isomorphism is almost never algebraic. 
To see this, one notes that the identity component of $\mdr(A)$ is the universal vector extension of $\Pic^0(A)$, which is an extension of $\Pic^0(A)$ by the vector group associated to the space of global one-forms $H^0(A,\Omega^1_A)$ on $A$.
On the other hand, the identity component of the Betti moduli space $\mb(A)$ is an affine algebraic torus. 
It is easy to see that on the vector space of one-forms $H^0(A,\Omega^1_A)$, the Riemann--Hilbert correspondence $\rh$ is an exponential map, and therefore cannot be algebraic (except in degenerate situations).

Let us call an algebraic subvariety of either $\mdr(A)$ or $\mb(A)$ \emph{bi-algebraic}, if its image (resp. pre-image) under $\rh$ is also algebraic. In \cite{simpson1993subspaces}, Simpson proves the following remarkable theorem that characterises the subvarieties of the rank one Betti and de Rham moduli spaces that are bi-algebraic with respect to the Riemann--Hilbert isomorphism. 

\begin{theorem}[Simpson - \cite{simpson1993subspaces}]\label{o-minimal-theorem}
    Let $S'$ be an irreducible algebraic subvariety of $\mdr(A)$ such that $S := \rh(S') \subseteq \mb(A)$ is also complex algebraic. Then $S$ must be a translate of a subtorus of $\mb(A)$.     
\end{theorem}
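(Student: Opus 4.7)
The plan is to reduce to the identity components, use the common complex-analytic uniformization of both moduli spaces by $H^1(A^{\an},\C)$, and then combine o-minimal tools with Ax--Lindemann for the algebraic torus to force $S$ to be a coset of a subgroup.

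After translation, I may assume $S' \subseteq \mdr^0(A)$, the identity component. As complex Lie groups, $\mdr^0(A)\cong H^1(A,\C)/H^1(A,\Z)$ (via the universal vector extension of $\Pic^0(A)$) and $\mb^0(A)\cong H^1(A,\C^\times)\cong (\C^\times)^{b_1(A)}$, and under these identifications $\rh$ on identity components is induced by the coordinate-wise exponential map $\exp\colon H^1(A,\C)\to H^1(A,\C^\times)$. Writing $\widetilde S := \exp^{-1}(S)\subseteq H^1(A,\C)\cong \C^{b_1(A)}$, the commutative diagram gives that $\widetilde S$ simultaneously equals the preimage of $S'$ under the lattice quotient $\pi\colon H^1(A,\C)\to \mdr^0(A)$. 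Hence $\widetilde S$ is an $H^1(A,\Z)$-periodic closed analytic subvariety carrying two distinct algebraic provenances.

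Next I would invoke o-minimality. Fix a fundamental parallelotope $F\subseteq H^1(A,\C)$ for the $H^1(A,\Z)$-action. The restricted maps $\pi|_F$ and $\exp|_F$ are both definable in $\R_{\mathrm{an},\exp}$, so the set $\widetilde S\cap F$ is a definable closed complex analytic subset of $F$. The heart of the argument is now to combine both algebraic structures simultaneously. On the one hand, the algebraicity of $S$ inside $\mb^0(A)$, via weak Ax--Lindemann for the algebraic torus, implies that any irreducible complex-algebraic subvariety of the affine space $\C^{b_1(A)}$ contained in $\widetilde S$ is a translate of a $\Q$-rational linear subspace. On the other hand, the algebraicity of $S'$ in $\mdr^0(A)$, together with the o-minimality of $\widetilde S\cap F$ and a Peterzil--Starchenko definable Chow (or Pila--Wilkie style) argument, should force $\widetilde S$ to actually contain such an affine-algebraic subvariety of dimension equal to $\dim_\C S$. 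Combining these two inputs with the $H^1(A,\Z)$-periodicity and the irreducibility of $\widetilde S$, one concludes that $\widetilde S$ is a union of translates of a single $\Q$-rational linear subspace, whence $S=\exp(\widetilde S)$ is a translate of a subtorus of $\mb^0(A)$.

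The principal obstacle is the step that converts the algebraicity of $S'$ in the intricate algebraic structure of $\mdr^0(A)$ (an extension of the abelian variety $\Pic^0(A)$ by the vector group $H^0(A,\Omega^1_A)$) into genuine affine algebraicity of some subvariety of $\widetilde S$ inside $\C^{b_1(A)}$. The pullback of algebraic coordinates on $\mdr^0(A)$ along $\pi$ produces highly transcendental theta-like analytic functions on $H^1(A,\C)$, so $\widetilde S$ is not affinely algebraic on the nose; the requisite affine algebraicity must be extracted from the simultaneous o-minimal definability of $\widetilde S\cap F$ under both descriptions, using rigidity properties of complex-analytic sets definable in $\R_{\mathrm{an},\exp}$.
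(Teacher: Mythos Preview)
Your setup is sound, and you have correctly located the crux: extracting affine algebraicity in $\C^{b_1(A)}$ from the algebraicity of $S'$ in $\mdr^0(A)$. But the resolution you sketch does not work as written. With a single \emph{bounded} fundamental parallelotope $F$, both $\pi|_F$ and $\exp|_F$ are definable bijections, so $\widetilde S\cap F$ is definable and analytic---but this buys you nothing, because $\widetilde S\cap F$ already sits inside a bounded open set, not as a closed subset of an algebraic variety. Definable Chow therefore does not apply. Nor does a Pila--Wilkie argument get started: there is no visible source of many rational or integer points beyond the trivial $H^1(A,\Z)$-periodicity, and that periodicity is shared by \emph{every} preimage, bi-algebraic or not. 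Ax--Lindemann for the torus tells you that any affine-algebraic subvariety of $\widetilde S$ is a rational affine subspace, but you never produce one; the pullback of $S'$ along $\pi$ involves theta-type functions and is not affine-algebraic, exactly as you say. So the step you flag as the principal obstacle is in fact a genuine gap, and your proposed tools do not close it.

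The paper's argument is organized quite differently and avoids Ax--Lindemann and Pila--Wilkie entirely. After reducing to an abelian $A$ and running an induction on the dimension of the \emph{stabilizer} of $S$ (quotienting when it is positive-dimensional), one is left with the case of trivial stabilizer. The key idea there is to use two \emph{different, unbounded} fundamental domains in $\C^{2g}$: the vertical strip $F=\{0\le \Re z_j<1\}$, on which $\exp$ is a definable bijection onto $\mb$, and a skew strip $G=\tilde\pi^{-1}(R)$ (the preimage of a fundamental parallelotope for the abelian variety $A^\vee$), on which the covering of $\mdr$ is a definable bijection. The decisive lemma is that $F\cap G$ is \emph{bounded}; this uses exactly the condition that $\Omega\cap\rho(T(\R))$ is discrete. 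One then builds a definable correspondence recording which integer translates are needed to carry pieces of $u|_F^{-1}(S)$ into $G$; triviality of the stabilizer forces the image of this correspondence to be discrete, hence finite by o-minimality, so $u|_F^{-1}(S)$ meets only finitely many translates of $G$ and is therefore bounded. Thus $S$ is a bounded closed subvariety of the affine torus, hence a point. Your single bounded parallelotope collapses precisely the tension between the two unbounded domains that drives this argument.
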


More precisely, Simpson proves that for an irreducible bi-algebraic subvariety $S \subseteq \mb(A)$, there is a map from $A$ to an \emph{abelian variety} $A'$ such that $S$ is the image of the homomorphism $\mb(A')\rightarrow \mb(A)$ induced by the pullback of rank one local systems. In this note, we provide a new proof of this theorem of Simpson, using tools from the theory of o-minimal structures. This is carried out in \S \ref{o-minimal-proof}.

The second goal of this note is to adapt the o-minimal proof to a $p$-adic setting, namely that of Mumford curves over non-archimedean fields. Let $(\C_p,|\cdot|)$ denote the completion of an algebraic closure $\overline{\Q}_p$ of $\Q_p$, with respect to the unique extension to $\overline{\Q}_p$  of the $p$-adic absolute value of $\Q_p$. 
Consider a Schottky group $\Gamma$ inside $\pgl_2(\C_p)$; that is to say that $\Gamma$ is a finitely-generated, discontinuous subgroup of $\pgl_2(\C_p)$ with no non-trivial elements of finite order (see \cite{gerritzen1980schottky} for the precise definitions). Such a group is automatically a free group, say of rank $g$. 
Associated to $\Gamma$ is the analytic subdomain $\Omega_\Gamma \subseteq \mathbb{P}^{1,\an}_{\C_p}$ obtained as the complement of the limit set $\mathcal{L}_\Gamma \subseteq \mathbb{P}^{1,\an}_{\C_p}$ so that $\Gamma$ acts properly and discontinuously on $\Omega_\Gamma$.  The $\C_p$-analytic space obtained as the quotient $\Omega_\Gamma/\Gamma$ is in fact a smooth, proper $\C_p$-analytic space, and thus naturally the analytic space associated to a genus $g$ smooth, projective algebraic curve $X$ over $\C_p$. Curves obtained in this manner are called Mumford curves. If we view the analytic spaces as Berkovich spaces, the analytic uniformizing map $p : \Omega_\Gamma \rightarrow X^\an$ is in fact a topological universal cover of $X^\an$ and $\pi_1^{\topo}(X^\an)$ is isomorphic to $\Gamma$. 

One may attach to any representation $\lambda : \Gamma \rightarrow \gl_n(\C_p)$ of the Schottky group, a vector bundle of degree $0$ and rank $n$ on the Mumford curve $X$ in the following manner. Given a representation $\lambda : \Gamma \rightarrow \gl_n(\C_p)$, consider the trivial rank $n$ bundle with trivial connection on the universal cover, $\Omega_\Gamma \times \C_p^n \rightarrow \Omega_\Gamma$. 
We have an action of $\Gamma$ on $\Omega_\Gamma\times \C_p^n$ wherein $\Gamma$ acts via the representation $\lambda$ on the second factor. 
The quotient $(\Omega_\Gamma\times \C_p^n)/ \Gamma \rightarrow \Omega_\Gamma/\Gamma = X^\an$ defines a rank $n$ analytic vector bundle on $X^\an$ with a natural connection which by GAGA is uniquely algebraizable. 
Just as in the complex setting one may again consider this association on the level of moduli spaces. For the purposes of this note we shall only be concerned with the rank one moduli spaces. As such, we define $\mb(X)$ as the affine algebraic torus $\Spec(\C_p[\Gamma^\mathrm{ab}])$ over $\C_p$. As before, $\mdr(X)$ denotes the moduli space of algebraic line bundles on $X$ equipped with a connection. As in the complex case, $\mdr(X)$ has dimension $2g$, however this time the Betti space $\mb(X)$ has dimension $g$, half of that of the de Rham moduli space. The association of a line bundle with connection to every representation $\Gamma \rightarrow  \C_p^\times$ sketched above, gives rise to an injective \emph{rigid-analytic} morphism, $\rho : \mb(X)^\an \hookrightarrow \mdr(X)^\an$. The fact that this map is rigid-analytic appears to be well-known to experts (see for instance \cite[p.\,143]{andre1998p} and \cite[\S 1.5.6]{andre2003period}).

Although in this $p$-adic setting of Mumford curves, the Betti space is half the dimension of the de Rham space, it is a natural question to ask when an algebraic subvariety of $\mdr(X)^\an$ intersects $\mb(X)^\an$ in an algebraic subvariety of $\mb(X)$. In \S \ref{p-adic-proof}, we prove the following $p$-adic analogue of Simpson's result:

\begin{theorem}\label{p-adic-theorem}
    Let $X$ be a Mumford curve over $\C_p$ of genus $g \geq 1$. Let $\mb(X)$ denote the moduli space of rank one topological $\C_p$-local systems on $X^\an$ and $\mdr(X)$ the moduli space of (algebraic) line bundles with integrable connection on $X$. Consider the rigid analytic immersion $\mb(X)^\an \hookrightarrow \mdr(X)^\an$ as above. Suppose $S' \subseteq \mdr^\an$ is an algebraic subvariety such that $S := S' \cap \mb^\an$ is an irreducible algebraic subvariety of $\mb$. Then $S$ is a translate of a subtorus of $\mb$. 
\end{theorem}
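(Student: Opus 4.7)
The plan is to adapt the o-minimal argument of \S \ref{o-minimal-proof} to the non-archimedean setting, replacing the complex o-minimal structure with an appropriate theory of $\C_p$-subanalytic sets, for instance in the sense of Lipshitz--Robinson. First I would make the immersion $\rho$ explicit. Since $\Gamma$ is free of rank $g$, $\Gamma^{\mathrm{ab}} \cong \Z^g$ and $\mb(X) \cong \Gm^g$. The identity component of $\mdr(X)$ is the universal vector extension $\Pic^\natural(X)$ of $J := \Pic^0(X)$, sitting in an exact sequence
\begin{equation*}
    0 \to H^0(X, \Omega^1_X) \to \Pic^\natural(X) \to J \to 0.
\end{equation*}
For a Mumford curve, $J$ admits the Raynaud--Mumford uniformization $J^\an \cong (\Gm^g)^\an / \Lambda$ by a period lattice $\Lambda$, and the composition $\mb(X)^\an \hookrightarrow \mdr(X)^\an \twoheadrightarrow J^\an$ is identified, under $\mb(X) \cong \Gm^g$, with this uniformization quotient.

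Given $S' \subseteq \mdr^\an$ algebraic and $S := S' \cap \mb^\an$ irreducible and algebraic in $\mb$, projecting $S'$ via $\pi : \mdr \twoheadrightarrow J$ yields a constructible subvariety $\pi(S') \subseteq J$ that contains the image of $S$ under the Mumford uniformization $\mb^\an \to J^\an$. The heart of the proof then becomes a non-archimedean Ax--Lindemann--Weierstrass statement for this uniformization: an irreducible algebraic subvariety of $\Gm^g$ whose image in $J$ lies inside a proper algebraic subvariety of $J$ must itself be a coset of an algebraic subtorus. I would establish this through $\C_p$-subanalytic techniques, mirroring the role played by o-minimality and the definable Chow theorem in the complex proof.

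The main obstacle is twofold. First, when $S'$ projects surjectively onto $J$, the Ax--Lindemann argument on $J$ alone is vacuous; one must then use the finer information that $S'$ is algebraic in all of $\mdr$, exploiting the interaction of $\rho$ with the vector-group component $H^0(X, \Omega^1_X) \subseteq \Pic^\natural(X)$---here $\rho$ encodes the connection data that lifts the Mumford uniformization in a specific rigid analytic way. Second, even in the favourable case, formulating the needed non-archimedean Ax--Lindemann statement requires subanalytic tameness technology over $\C_p$ sufficient for a Pila--Wilkie-style counting argument. I expect both difficulties to be addressed by viewing $\rho$ as a globally $\C_p$-subanalytic map into a fixed algebraic variety and invoking a definable Chow theorem in that framework, thereby concluding that $S$ is a translate of a subtorus of $\mb(X)$.
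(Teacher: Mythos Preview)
Your setup is correct: $\mb(X)\cong\Gm^g$, the composite $\mb(X)^\an\to J^\an$ is the Mumford uniformization, and the right tameness framework is Lipshitz--Robinson rigid subanalytic sets. But from there the plan diverges from what actually works, and the two obstacles you flag are real and unresolved by the tools you name. The paper does not pass through an Ax--Lindemann statement, Pila--Wilkie counting, or any definable Chow theorem---nor, in fact, does the complex proof of \S\ref{o-minimal-proof}, which you are slightly misremembering. Projecting $S'$ to $J$ discards precisely the information needed when $\pi(S')=J$; instead one works with the full map $\phi:=i\oplus\rho:V^\an\times T^\an\to P^\an$ (here $V=H^0(X,\Omega^1_X)$, $T=\mb(X)$, $P=\mdr(X)$), which is surjective on $\C_p$-points. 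The technical lemma you are missing is that a \emph{torsion-free} closed rigid-analytic subgroup of $(\Gm^n)^\an$ is discrete (\autoref{analytic-subgroups-of-gm^n}); applied to $\ker(\pi\circ\rho)=\rho^{-1}(V)$ this shows the kernel is a full-rank lattice in $T^\an$, so $T$ admits a \emph{bounded} fundamental domain $F$ for it and $\phi|_{V\times F}$ becomes a rigid-subanalytic bijection onto $P(\C_p)$.

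With that in hand, and after reducing to the case of trivial stabilizers, the argument is the direct fundamental-domain trick from \S\ref{o-minimal-proof} rather than a point-counting argument: one builds a subanalytic incidence set $D\subseteq V\times F\times S$ and a subanalytic map $g:D\to T(\C_p)$, $(v,f,s)\mapsto s^{-1}f$, whose image is forced (using triviality of the stabilizer of $S'$) to lie in the discrete lattice $\ker(\pi\circ\rho)$. Tameness of rigid subanalytic sets gives $|g(D)|<\infty$ immediately---no counting theorem required---so $S$ meets only finitely many $\ker(q)$-translates of the bounded set $F$, whence $S$ is bounded in $T(\C_p)$ and therefore a single point. A $p$-adic Pila--Wilkie or definable Chow theorem is neither invoked nor, as stated, obviously adequate: the map $\rho$ is not globally subanalytic on all of $\mb(X)^\an$ (only on bounded pieces), which is exactly why the fundamental-domain reduction is the right move.
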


\subsection*{Strategy of the proof}
The idea to use o-minimality is inspired by the several recent works wherein o-minimality has played a fundamental role in establishing key `functional transcendence' theorems (see for instance \cite{bakker2019ax} and \cite{mok2019ax}).

In particular, two algebraization theorems from o-minimal geometry, have been central to the proofs of these transcendence results. The first of these theorems is the counting theorem of Pila--Wilkie \cite{pila2006rational}, and the second is the definable Chow theorem of Peterzil--Starchenko \cite{starchenko2008nonstandard} which states that any closed, complex-analytic subset of a complex algebraic variety that is also definable in some o-minimal structure, is an algebraic subset.   

In the $p$-adic setting, the role of o-minimal structures in our proofs is replaced by the theory of rigid subanalytic sets developed by Lipshitz--Robinson (\cite{lipshitz1993rigid}, \cite{lipshitz2000rings}). The theory of rigid subanalytic sets provides a class of subsets of $\C_p^n$ that have `tame' topological properties similar to those satisfied by subsets of $\R^n$ definable in o-minimal structures.

To explain the main idea behind our proof, let us restrict to the complex setting. By replacing the smooth, projective, irreducible, complex algebraic variety $A$ by its Albanese variety, we may even assume that $A$ is itself a complex abelian variety, say of dimension $g$. In this case, the Betti moduli space $\mb(A)(\C)$ is isomorphic to the affine algebraic torus $(\C^\times)^{2g}$. 

Let us endow the Betti and de Rham moduli spaces with their natural $\R^{\mathrm{an},\exp}$-definable structures (that extend their complex algebraic structures). The fact that the analytic isomorphism $\rh$ is transcendental is in particular reflected in the fact that the analytic isomorphism $\rh$ is not definable in any o-minimal structure (by the definable Chow theorem). Thus, the isomorphism $\rh : \mdr(A)(\C)\simeq (\C^\times)^{2g}$ induces on the affine algebraic torus $(\C^\times)^{2g}$ two distinct $\R^{\mathrm{an}, \exp}$-definable structures. Consider the universal covering map $E : \C^{2g} \rightarrow (\C^\times)^{2g}$ that sends $(z_1,\ldots,z_{2g})\mapsto (\exp(2\pi i z_1),\ldots, \exp(2 \pi i z_{2g})).$ The two different definable structures on $(\C^\times)^{2g}$ may be interpreted as choosing two distinct fundamental domains for the action of $\Z^{2g}$ (acting via deck transformations) on the universal covering map $E : \C^{2g} \rightarrow (\C^\times)^{2g}$. It is easy to see that the restriction of $E$ to the fundamental domain $F := \{(z_1,\ldots,z_{2g}) \in (\C^\times)^{2g} : 0 \leq \operatorname{Re}(z_i) < 1 \}$ is an $\R^{\mathrm{an},\exp}$-definable bijection onto the Betti definable structure on $(\C^\times)^{2g}$. On the other hand, the definable structure on $(\C^\times)^{2g}$ induced from the de Rham side, corresponds to the choice of a skew fundamental domain in $\C^{2g},$ that we shall call $G$. It turns out that the intersection $F\cap G$, of these two fundamental domains is a \emph{bounded} subset of $\C^{2g}$ (see \autoref{bounded-lemma}). 

Now, a subvariety of $(\C^\times)^{2g}$ that is bi-algebraic for the Riemann--Hilbert map is thus an analytic subvariety that is definable with respect to both these structures. One may now imagine that there is a certain obstruction for an analytic subset to be definable in both these structures. Indeed, for a definable subset $S$ of $F$, its corresponding set in $G$, is obtained by translating $S \cap (G- \vec{n})$ by $\vec{n}$ for every $\vec{n} \in \Z^{2g}$. Since $F \cap G$ is bounded, this means that if $S$ is unbounded, then one would have to chop $S$ into infinitey many pieces each of which would have to be translated by a different vector $\vec{n} \in \Z^{2g}$. This makes it difficult for the image of $S$ in $G$ to have finitely many connected components for instance, in particular, for it to be definable.

\subsection*{The Dolbeault moduli space and the non-abelian Hodge correspondence}

For a smooth, projective complex algebraic variety $A$, in addition to the Betti and de Rham moduli spaces, there is a third closely related moduli space, $\mdol(A,r)$, that is the moduli space of rank $r$ Higgs bundles on $A$ (also called the rank $r$ Dolbeault moduli space attached to $A$).     

In \cite{simpson1994moduliI} and \cite{simpson1994moduliII}, Simpson establishes a \emph{real-analytic} isomorphism $\mdol(A,r)(\C)\xrightarrow{\sim} \mb(A,r)(\C)$ often referred to as the non-abelian Hodge correspondence. 

Moreover, in the rank one case, we recall that the identity component $\mdol(A)(\C)^\circ$ of the Dolbeault moduli space is isomorphic to the product $\Pic^\circ(A)(\C) \times H^0(A,\Omega^1_A)$ and the identity component $\mb(A)(\C)^\circ$ is isomorphic to the affine torus $(\C^\times)^{2g}$ where $g = \dim(\Pic^\circ(A)) = \dim(H^0(A,\Omega^1_A))$. The rank one non-abelian Hodge correspondence $\Pic^\circ(A)(\C) \times H^0(A,\Omega^1_A)\simeq (\C^\times)^{2g} $ is the map that identifies $\Pic^\circ(A)(\C) \xrightarrow{\sim} (S^1)^{2g}$ and $H^0(A,\Omega^1_A) \xrightarrow{\sim} \R_{>0}^{2g}$ via \emph{real exponentials} (see \cite[Example on p.\,21]{simpson1992higgs}). Thus, the non-abelian Hodge correpondence $\mdol(A)(\C) \xrightarrow{\sim} \mb(A)(\C)$ in rank one, is an $\R^{\mathrm{an},\exp}$-definable real analytic isomorphism. 

In particular, if we have an algebraic subvariety $S_{Dol} \subseteq \mdol(A)$ such that its corresponding subset $S_{dR} \subseteq \mdr(A)$ is also algebraic in $\mdr$, then on the one hand $S_B \subseteq \mb(A)$ is $\R^{\mathrm{an},\exp}$-definable since $\mdol(A)(\C)\xrightarrow{\sim}\mb(A)(\C)$ is definable, and on the other hand $S_B \subseteq \mb(A)$ is complex analytic since $\rh : \mdr(A)(\C) \xrightarrow{\sim} \mb(A)(\C)$ is complex-analytic. Thus, by the definable Chow theorem, $S_B$, being both definable and complex analytic is \emph{algebraic} and therefore by \autoref{o-minimal-theorem}, $S_B$ is a finite union of translates of subtori.

This raises the question of whether the non-abelian Hodge correspondence is also o-minimally definable in arbitrary rank, which we think would have interesting consequences. For instance, an immediate consequence would be that a subvariety that is bi-algebraic in both $\mdol$ and $\mdr$, must automatically be tri-algebraic (i.e. algebraic in $\mb$ as well).


\subsection*{Acknowledgements} It is a great pleasure to thank Peter Sarnak for pointing me to Simpson's work, asking me the prescient question of whether o-minimal techniques could help prove such results and also for encouraging conversations. I am very grateful to Jacob Tsimerman for several helpful ideas and suggestions and to Ananth Shankar, Carlos Simpson, and Akshay Venkatesh for stimulating discussions. 

\section{An o-minimal proof of Simpson's result}\label{o-minimal-proof}

We now turn to the proof of \autoref{o-minimal-theorem}. We recall the setup of the theorem. $A$ is a smooth, projective, connected complex algebraic variety. The Albanese map of $A$ into its Albanese variety $\mathrm{Alb}(A)$ induces via pullback of local systems (resp. line bundles with flat connection) a finite \emph{algebraic} map of the corresponding (rank one) moduli spaces $\mb(\mathrm{Alb}(A)) \rightarrow \mb(A)$ (resp. $\mdr(\mathrm{Alb}(A)) \rightarrow \mdr(A)$), with these maps being compatible with the Riemann-Hilbert isomorphisms $\rh : \mdr^\an \xrightarrow{\sim} \mb^\an$. We may thus replace $A$ by its Albanese, and assume from now on that $A$ is a complex abelian variety, say of dimension $g$.

The Betti moduli space is an affine torus over $\C$ of dimension $2g$. Indeed, $\mb(A)(\C) = \Hom(\pi_1(A,0),\C^*) \approx (\C^*)^{2g}$, where the last isomorphism rests on a choice of basis of $H_1(A(\C),\Z).$ The de Rham moduli space $\mdr(A)$ is the universal vector extension $\Pic^\natural_{A^\vee}(\C)$ of the dual abelian variety $A^\vee$ that fits into the exact sequence:
\[0 \rightarrow \Gamma(A,\Omega^1_{A/\C}) \rightarrow \Picu \xrightarrow{\pi} A^\vee(\C) \rightarrow 0.\] On $\C$-points, $\pi$ is the map that forgets the connection; that is, $\pi$ takes a line bundle with flat connection, $(\mathcal{L},\nabla)$ to the line bundle $\mathcal{L}$.
We shall prove the following slightly more general claim:

\begin{proposition}\label{bi-algebraicity-prop}
    Suppose $P$ is an algebraic group over $\C$ that is an extension of a complex abelian variety $B$ of dimension $g$ by a vector group $\Omega$ over $\C$ of dimension $g$. Thus there is an exact sequence of complex algebraic groups $0 \rightarrow \Omega \rightarrow P \xrightarrow{\pi} B \rightarrow 0.$ Suppose $T$ is a complex affine torus of dimension $2g$; thus $T \approx \mathbb{G}_{m,\C}^{2g}$. 
    Suppose we have a complex analytic group isomorphism $\rho : T^\an \xrightarrow{\sim} P^\an$ such that $T(\R) \cap \rho^{-1}(\Omega)$ is a discrete subset of $T(\R)$ (in the Euclidean metric topology). \footnote{Note that for any \emph{algebraic} isomorphism $\phi : \mathbb{G}_{m,\C}^{2g} \xrightarrow{\sim} T$, the image $\phi(\mathbb{G}_m^{2g}(\R)) \subseteq T(\C)$ is independent of $\phi$, since all algebraic group automorphisms of $\mathbb{G}_{m,\C}^{2g}$ are defined over $\Q$. We call $T(\R)$ the image $\phi(\mathbb{G}_m^{2g}(\R))$ for any such $\phi$ as above and we may view $T(\R)$ as a real Lie subgroup of $T(\C)$.\label{def-of-T(R)}}  
    Then if $S \subseteq T$ is an irreducible algebraic subvariety such that $S' := \rho(S)$ is also algebraic in $P$, then $S$ is a translate of a subtorus of $T$. 
\end{proposition}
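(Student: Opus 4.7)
The plan is to lift $\rho$ to universal covers and exploit the incompatibility of two $\R^{\an,\exp}$-definable structures on the torus $T$, as outlined in the introduction. Both $T^\an$ and $P^\an$ are connected complex Lie groups of dimension $2g$. Fixing a trivialization $T\cong \mathbb{G}_{m,\C}^{2g}$, write $E_T\colon \C^{2g}\to T(\C)$, $(z_1,\ldots,z_{2g})\mapsto (e^{2\pi i z_1},\ldots,e^{2\pi i z_{2g}})$, with $\ker E_T = \Z^{2g}$. The universal cover $E_P\colon \C^{2g}\to P(\C)$ has kernel the period lattice $\Lambda$ of the abelian quotient $B$, sitting in a $\C$-linear complement $V$ to $\Lie(\Omega)\subset \C^{2g}$. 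Since $\rho$ is a complex analytic group isomorphism between simply connected covers of the same dimension, it lifts to a $\C$-linear isomorphism $\tilde\rho\colon \C^{2g}\to \C^{2g}$ sending $\Z^{2g}$ onto $\Lambda$. Equip $T^\an$ and $P^\an$ with their natural $\R^{\an,\exp}$-definable structures extending their algebraic structures. The restriction of $E_T$ to $F := \{z\in\C^{2g}:0\le\Re(z_i)<1\}$ is definable, and one can choose a fundamental domain $G'\subset\C^{2g}$ for the $\Lambda$-action, adapted to the splitting $\Lie(\Omega)\oplus V$, so that $E_P|_{G'}$ is definable. Set $G:=\tilde\rho^{-1}(G')$, a fundamental domain for $\Z^{2g}$ on $\C^{2g}$ adapted to the de Rham structure on $P$ transported to $T$.

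\textbf{Boundedness of $F\cap G$.} The key geometric input is that $F\cap G$ is a bounded subset of $\C^{2g}$. The recession directions of $F$ are exactly $i\R^{2g}$, the real Lie algebra of the real form $T(\R)\subset T(\C)$; the recession directions of $G$ are exactly $W:=\tilde\rho^{-1}(\Lie(\Omega))$, since $G'$ is unbounded only in the $\Lie(\Omega)$ direction. Boundedness of $F\cap G$ therefore reduces to $i\R^{2g}\cap W = \{0\}$, and this real-linear condition is equivalent, via the exponential map, to the hypothesis that $T(\R)\cap\rho^{-1}(\Omega)$ is discrete at the identity.

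\textbf{Forcing the subtorus conclusion.} Let $S\subseteq T$ be irreducible algebraic with $\rho(S)$ algebraic in $P$, and write $\tilde S := E_T^{-1}(S)\subset \C^{2g}$, a $\Z^{2g}$-invariant complex analytic set. Algebraicity of $S$ in $T$ makes $\tilde S\cap F$ an $\R^{\an,\exp}$-definable set; algebraicity of $\rho(S)$ in $P$ makes $\tilde S\cap G$ definable (via $\tilde\rho$ and $E_P|_{G'}$). By $\Z^{2g}$-invariance,
\[\tilde S\cap G \;=\; \bigcup_{n\in\Z^{2g}} ((\tilde S\cap F)+n)\cap G,\]
and each summand lies inside the bounded translate $(F\cap G)+n$. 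If $\tilde S\cap F$ were unbounded in a direction transverse to the lattice, the number of $n\in\Z^{2g}$ contributing a nonempty piece would grow without bound, producing infinitely many pairwise disjoint, non-coalescing pieces of $\tilde S\cap G$ --- incompatible with the finiteness of connected components of a definable set. This forces the unbounded directions of $\tilde S$ to lie in rational directions of $\Z^{2g}$, and more strongly, that $\tilde S$ is a union of $\Z^{2g}$-translates of a complex-linear subspace $L\subset\C^{2g}$ defined over $\Q$ with respect to $\Z^{2g}$. Irreducibility of $S$ then yields that $S=E_T(\tilde S)$ is a translate of the subtorus $E_T(L)\subseteq T$.

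\textbf{Main obstacle.} The hard step is the last one: deducing the linearity of $\tilde S$ rigorously from double definability and the boundedness of $F\cap G$. I expect the cleanest implementation to combine the Peterzil--Starchenko definable Chow theorem with a Pila--Wilkie-style count of the integer translations $n$ producing pieces in $G$, extracting enough lattice-theoretic relations to run an Ax--Lindemann-type rigidity argument. The technical subtlety to track is the case of higher-codimension $S$, where the pieces $(\tilde S\cap F)+n$ may accumulate in $G$ in complicated ways, requiring a careful cell-decomposition argument rather than a naive connected-component count.
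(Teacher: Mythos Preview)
Your setup and the boundedness of $F\cap G$ are correct and match the paper's Lemma~2.3 essentially verbatim. The gap is exactly where you locate it: the passage from double definability to linearity of $\tilde S$. Your proposed route via Pila--Wilkie counting and an Ax--Lindemann argument is not developed, and in fact is not needed. The paper avoids this difficulty entirely by a structural reduction you are missing.

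The paper proceeds by induction on $g$ through the \emph{stabilizer} of $S$. Translate so $S$ contains the identity, and let $G_T\subseteq T$, $G_P\subseteq P$ be the stabilizers of $S$ and $S'$; these are algebraic subgroups with $\rho(G_T^\an)=G_P^\an$. If $\dim G_T>0$, one quotients by the identity components and checks (via a torsion-point count using the analogue of your Lemma on torsion, and a Lie-algebra argument) that the quotient map $\overline\rho:(T/G_T^\circ)^\an\to (P/G_P^\circ)^\an$ again satisfies all hypotheses of the Proposition, with strictly smaller $g$. Induction then finishes this case.

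The real point is what happens when $G_T$ is \emph{finite}. Here the paper does not try to prove $S$ is a subtorus translate --- it proves $S$ is a \emph{single point}. With $U:=(E_T|_F)^{-1}(S)$ and $U':=(E_P\circ\tilde\rho)|_G^{-1}(S')$ both definable, consider the definable set
\[
\Gamma=\{(a,a')\in U\times U':\exists\,\epsilon>0,\ \exists\,\vec x\in\R^{2g},\ \vec x+(B(a,\epsilon)\cap U)=B(a',\epsilon)\cap U'\ \text{and}\ \vec x+a=a'\}.
\]
This contains the graph of the (non-definable) bijection $\phi|_U$. The finiteness of $G_T$ forces that for fixed $a$ the possible $a'$ differ by elements of a \emph{finite} set $\Delta$: if $(a,a'_1),(a,a'_2)\in\Gamma$ then $S\cdot E_T(a'_2-a'_1)$ meets $S$ in a relatively open set, so by irreducibility $E_T(a'_2-a'_1)\in G_T$, and $a'_2-a'_1\in(-1,1)^{2g}$. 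Hence the definable map $(a,a')\mapsto a'-a$ has image contained in the discrete set $\Z^{2g}+\Delta$; by o-minimality the image is finite. Therefore $U$ meets only finitely many translates $G+\vec n$, so $U\subseteq\bigcup_{\text{finite}}(F\cap(G+\vec n))$ is bounded by your boundedness lemma. Then $S\subseteq(\C^\times)^{2g}$ is compact, and an irreducible compact closed subvariety of an affine variety is a point.

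So the missing idea is: do not aim for linearity of $\tilde S$ directly. Split into the positive-stabilizer case (handled by induction) and the finite-stabilizer case (where $S$ is forced to be a point by the definable-image-in-a-discrete-set trick). No Pila--Wilkie, no Ax--Lindemann, no cell decomposition is required.
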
 

To see how \autoref{bi-algebraicity-prop} applies to yield a proof of \autoref{o-minimal-theorem}, we only need to check that for a complex abelian variety $A$ of dimension $g$, $\rh^{-1}(\mb(\R))\cap \Gamma(A, \Omega^1_{A/\C})$ is discrete in $\mdr(\C).$ Since this discreteness property is invariant under finite quotients, we may as well assume that the complex abelian variety is principally polarized. In this case, we may choose algebraic coordinates on $\Gamma(A, \Omega^1_{A/\C}) \simeq \C^g$ and on $\mb(\C) \simeq (\C^*)^{2g}$ so that the map $\rh\vert_{\Gamma(A,\Omega^1_A)} : \Gamma(A,\Omega^1_A)\hookrightarrow \mb(\C)$ has the form $E : \C^g \hookrightarrow (\C^*)^{2g}$ sending $\vec{z}:=(z_1,\ldots,z_g) \mapsto (e^{z_1},\ldots,e^{z_g},e^{l_1(\vec{z})},\ldots,e^{l_g(\vec{z})})$ such that the $g \times g$ matrix of coefficients of the linear forms $l_1,\ldots,l_g$, lies in the Siegel upper half space. Namely, the matrix of coefficients is symmetric, and its imaginary part is positive-definite. Now the discreteness of $E(\C^g)\cap (\R^*)^{2g}$, boils down to the fact that the linear forms $z_1,\ldots,z_g, l_1(\vec{z}),\ldots,l_g(\vec{z})$ cannot all simultaneously be real unless $\vec{z} = 0$.  

In the course of the proof of \autoref{bi-algebraicity-prop}, we shall need the following Lemma that we prove first.

\begin{lemma}\label{torsion-lemma}
    Suppose $P$ is an algebraic group over $\C$ that is an extension of a vector group $\Omega$ over $\C$ and a complex abelian variety $B$. Thus we have an exact sequence:  
    $$0 \rightarrow \Omega \rightarrow P \xrightarrow{\pi} B \rightarrow 0.$$ Then $\pi$ induces an isomorphism on torsion subgroups $\pi : P(\C)_{\mathrm{tors}} \xrightarrow{\sim} B(\C)_{\mathrm{tors}}$. 
\end{lemma}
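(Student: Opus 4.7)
The plan is to establish injectivity and surjectivity of the induced map $\pi_{\mathrm{tors}} : P(\C)_{\mathrm{tors}} \to B(\C)_{\mathrm{tors}}$ separately, with both steps boiling down to the fact that a $\C$-vector group is a torsion-free, uniquely divisible abelian group under addition.

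For injectivity I observe that $\ker(\pi) = \Omega(\C)$ is a finite-dimensional $\C$-vector space, hence torsion-free as an abstract abelian group. Therefore any element of $P(\C)_{\mathrm{tors}}$ in the kernel of $\pi$ is automatically trivial, giving injectivity of $\pi_{\mathrm{tors}}$ at no cost.

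For surjectivity, fix $b \in B(\C)_{\mathrm{tors}}$ of order dividing $n$. I first lift $b$ to some (a priori non-torsion) element $p \in P(\C)$; this uses that $\pi$ is surjective on $\C$-points, which holds because $P \to B$ is a smooth surjective morphism of finite type schemes over the algebraically closed field $\C$ (equivalently, it is a torsor under the vector group $\Omega$, locally trivial in the Zariski topology on $B$). Then $np$ lies in $\ker(\pi) = \Omega(\C)$. Since $\Omega(\C)$ is a $\C$-vector space, the multiplication-by-$n$ map on $\Omega(\C)$ is a bijection, so there exists a unique $\omega \in \Omega(\C)$ with $n\omega = np$. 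Setting $p' := p - \omega \in P(\C)$, one has $\pi(p') = \pi(p) = b$ and $n p' = np - n\omega = 0$, so $p'$ is a torsion lift of $b$, completing surjectivity.

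There is no serious obstacle: the only mildly non-formal input is the surjectivity of $\pi$ on $\C$-points, which is immediate from the fact that a vector-group-torsor over a variety over $\C$ is Zariski-locally trivial (so its total space surjects onto the base on $\C$-points). Everything else is divisibility of $\Omega(\C)$ as an abelian group, which is automatic from its $\C$-vector space structure.
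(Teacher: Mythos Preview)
Your proof is correct and is essentially identical to the paper's own argument: both establish injectivity from torsion-freeness of $\Omega(\C)$ and surjectivity by lifting an $N$-torsion point $b$ to some $p \in P(\C)$, then correcting by the unique $N$-th root of $Np$ in the vector group. The only difference is that you spell out why $\pi$ is surjective on $\C$-points, which the paper silently assumes.
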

\begin{proof}
    $\pi : P(\C)_{\mathrm{tors}} \rightarrow B(\C)_{\mathrm{tors}}$ is indeed injective since $\Omega(\C)$ being a vector group is torsion-free. Let $b \in B(\C)[N]$ be an $N$-torsion point. Pick an element $p \in P(\C)$ such that $\pi(p) = b$. Then, $w := N\cdot p \in \Omega(\C)$. The group $\Omega(\C)$ being a vector group is uniquely divisible, and thus there is a unique $w' \in \Omega$ such that $N \cdot w' = w.$ Now, $\pi(p-w') = \pi(p) = b$ and $N\cdot (p-w') = N\cdot p -N\cdot w' = 0$. Thus, $(p-w') \in P(\C)_{\mathrm{tors}}$ and $\pi : P(\C)_{\mathrm{tors}} \rightarrow B(\C)_{\mathrm{tors}}$ is surjective, completing the proof of the Lemma.  
\end{proof}

\begin{proof}[Proof of \autoref{bi-algebraicity-prop}]
We shall induct on $g$. Let $S \subseteq T(\C)$ be an irreducible bi-algebraic subvariety and set $S' := \rho(S)\subseteq P(\C)$ as in the statement of the Proposition. By translating  $S$ and $S'$ we may assume that $S$ and $S'$ pass through the identity elements. Let $G_T$ (resp. $G_P$) denote the stabilizer of $S$ (resp. $S'$) inside $T$ (resp. $P$). We note that $G_T$ (resp. $G_P$) is a Zariski-closed algebraic subgroup of $T$ (resp. $P$). Evidently, $\rho$ induces an analytic isomorphism $\rho : G_T^\an \xrightarrow{\sim} G_P^\an$.

\textbf{\underline{Case 1}: Stabilizers are positive dimensional:}

Suppose that $G_P$ (and hence $G_T$) is positive dimensional. Let $G_P^o$ and $G_T^o$ denote the identity connected components of $G_P$ and $G_T$ respectively. $G_P^o$ (resp. $G_T^o$) is a finite-index algebraic subgroup of $G_P$ (resp. $G_T$). Then $\rho$ induces an analytic isomorphism on the quotients: $\overline{\rho} : (T/G_T^o)^\an \xrightarrow{\sim} (P/G_P^o)^\an$ and furthermore, $S$ and $S'$ are pullbacks of algebraic subvarieties of $T/G_T^o$ and $P/G_P^o$ respectively, that correspond to one another under $\overline{\rho}$. 

We would now like to apply the inductive hypothesis to the quotient $\overline{\rho}$. We now proceed to check that $\overline{\rho}$ indeed satisfies all the required hypothesis in the statement of the Proposition. 
Note that $T/G_T^o$ is again an algebraic affine torus over $\C$. On the other hand, $P/G_P^o$ sits in the following exact sequence: $0\rightarrow \Omega/(\Omega \cap G_P^o) \rightarrow P/G_P^o \xrightarrow{\pi} B/\pi(G_P^o)\rightarrow 0$ and since a quotient by a closed algebraic subgroup of the vector group $\Omega$ is still a vector group, $P/G_P^o$ is an extension of the abelian variety $B/\pi(G_P^o)$ by a vector group. Furthermore, $\overline{\Omega} := \Omega/(\Omega \cap G_P^o)$ and $B/\pi(G_P^o)$ have the same dimension, as can be seen, for instance, by counting torsion points. Namely, if $d:=\dim(T/G_T^o) = \dim(P/G_P^o)$  then for $N \geq 2$ we have $N^{d} = |(T/G_T^o) [N]| = |(P/G_P^o)[N]| = |B/\pi(G_P^o)[N]| = N^{2\cdot \dim(B/\pi(G_P^o))},$ where the second to last equality follows from \autoref{torsion-lemma}. Thus $d= 2 \dim(B/\pi(G_P^o))$. Let $g' := \dim(B/\pi(G_P^o)).$
We also claim that  $\overline{\rho}^{-1}(\overline{\Omega})\cap (T/G_T^o)(\R)$ is discrete in $(T/G_T^o)(\C).$ 
The claim that $\overline{\rho}^{-1}(\overline{\Omega})\cap (T/G_T)(\R)$ is discrete is equivalent to the claim that the intersections of the (real) Lie algebras $\Lie(\overline{\Omega}) \cap \Lie(\overline{\rho}((T/G_T^o)(\R)))$ inside $\Lie((P/G_P^o)(\C))$is $\{0\}$, which by the above dimension count is in turn equivalent to showing that $\Lie(\overline{\Omega}) + \Lie(\overline{\rho}((T/G_T^o)(\R))) = \Lie((P/G_P^o)(\C))$. 
Since we know that $\rho(T(\R))\cap \Omega$ is discrete in $P(\C)$, by the same reasoning we know that, $\Lie(\Omega)\oplus \Lie(\rho(T(\R))) = \Lie(P(\C)).$ 
It is easy to check from the definition of $T(\R)$ (see \autoref{def-of-T(R)}) that $(T/G_T^o)(\R) \cong T(\R)/G_T^o(\R).$ In particular, $\Lie(\rho(T(\R)))$ surjects onto $\Lie(\overline{\rho}((T/G_T^o)(\R)))$, under the natural surjection $\Lie(P(\C)) \twoheadrightarrow \Lie((P/G_P^o)(\C))$ and similarly we have the surjection $\Lie(\Omega)\twoheadrightarrow \Lie(\overline{\Omega})$. This therefore proves the required claim that $\Lie(\overline{\Omega}) + \Lie(\overline{\rho}((T/G_T^o)(\R))) = \Lie((P/G_P^o)(\C))$ and thus that $\overline{\rho}^{-1}(\overline{\Omega})\cap (T/G_T)(\R)$ is discrete.

Thus we may apply the inductive hypothesis to $\overline{\rho} : (T/G_T)^\an \xrightarrow{\sim} (P/G_P)^\an$ and the images of $S$ and $S'$ in $T/G_T$ and $P/G_P$ to conclude that $S/G_T$ and thus also $S$ is a translate of a subtorus. This concludes the case when the stabilizers $G_T$ and $G_P$ are positive dimensional subgroups.

\textbf{\underline{Case 2}: Stabilizers are finite:}

We now assume that the stabilizers $G_T$ and $G_P$ of $S$ and $S'$ are finite. We note that there are unique $\R^{\an,\exp}$-definable structures on $T(\C)$ and $P(\C)$ extending their algebraic structures. Fix an algebraic isomorphism $T(\C) \approx (\C^*)^{2g}$, and consider the universal covering map $u : \C^{2g} \rightarrow (\C^*)^{2g}$ that sends $(z_1,\ldots, z_{2g})\mapsto (\exp(2\pi i z_1),\ldots, \exp(2\pi i z_{2g}))$. Set $F := \{(z_1,\ldots,z_{2g}) \in \C^{2g} : \Re(z_i) \in [0,1)\}$. We then note that $u|_{F} : F \rightarrow T(\C)$ is an $\R^{\an,\exp}$-definable bijection. Consider $\rho \circ u : \C^{2g} \rightarrow P(\C),$ which is also a universal cover for $P(\C)$. The inclusion of the vector group $\iota : \Omega \hookrightarrow P(\C)$, lifts to a \emph{linear} injective map $\tilde{\iota} : \Omega \hookrightarrow \C^{2g}$. Choose a complex uniformization $p : \C^g \rightarrow B(\C)$; thus the kernel of $p$ is a rank $2g$ lattice, say $L$, in $\C^g$. We may lift $\pi : P(\C) \rightarrow B(\C)$ to a linear surjection $\tilde{\pi} : \C^{2g} \rightarrow \C^g$ thus obtaining the following commutative diagram with exact rows:

$$\begin{CD}
    0 @>>> \Omega @>\tilde{\iota}>> \C^{2g} @>\tilde{\pi} >> \C^g @>>> 0\\
      @.          @|    @VV\rho \circ u V              @VVp V    @. \\
    0 @>>> \Omega @>\iota>> P(\C) @>\pi >> B(\C) @>>> 0  
\end{CD}$$

Let $R \subseteq \C^g$ be a fundamental parallelopiped for $L$. Then $G := \tilde{\pi}^{-1}({R})$ is a fundamental domain for the action of $\Z^{2g}$ on $\C^{2g}$ and furthermore, $\rho \circ u \vert_G : G \rightarrow P(\C)$ is an $\R^{\an,\exp}$-definable bijection. For notational simplicity, let us call $v : G \rightarrow P(\C)$ this bijection $\rho \circ u \vert_G.$ We have a (\emph{non-definable}) bijection $\phi : F \rightarrow G$ defined by $\phi := v^{-1}\circ \rho \circ u\vert_F$. 

\begin{lemma}\label{bounded-lemma}
    $F \cap G$ is a bounded subset of $\C^{2g}.$ 
\end{lemma}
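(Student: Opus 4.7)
The strategy is to exploit the fact that each of $F$ and $G$ is bounded except along a single real subspace of $\C^{2g}$: boundedness of the intersection then reduces to checking that those two subspaces meet trivially. Explicitly, $F = [0,1)^{2g} + i\R^{2g}$ is unbounded only along the direction $i\R^{2g}$. Since $R$ is a bounded fundamental parallelopiped for $L$ and $\tilde{\pi}$ is a $\C$-linear surjection with kernel $\tilde{\iota}(\Omega)$, the preimage $G = \tilde{\pi}^{-1}(R)$ is a ``tube'' over a bounded transversal and is unbounded only along $\tilde{\iota}(\Omega) \subseteq \C^{2g}$.

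Concretely, for $z = x + iy \in F \cap G$ with $x, y \in \R^{2g}$, the condition $z \in F$ forces $x \in [0,1)^{2g}$. Applying $\tilde{\pi}$ and using $\C$-linearity gives $i\tilde{\pi}(y) = \tilde{\pi}(z) - \tilde{\pi}(x)$, and both right-hand terms are bounded (the first because $\tilde{\pi}(z) \in R$, the second because $x$ lies in a compact set and $\tilde{\pi}$ is continuous). Hence $\tilde{\pi}(y)$ is bounded. So it suffices to verify that the real-linear map $\tilde{\pi}\vert_{\R^{2g}} : \R^{2g} \to \C^g$ is injective (both spaces have real dimension $2g$, so injectivity automatically upgrades to properness). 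Its kernel is $\tilde{\iota}(\Omega) \cap \R^{2g}$, which coincides with $\tilde{\iota}(\Omega) \cap i\R^{2g}$ after multiplication by $i$, using that $\tilde{\iota}(\Omega)$ is a complex subspace.

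Thus the lemma reduces to showing $\tilde{\iota}(\Omega) \cap i\R^{2g} = \{0\}$, which I would extract from the discreteness hypothesis exactly as in the Lie algebra argument already used in Case 1. Under the identification $\Lie(P(\C)) \cong \C^{2g}$ coming from the universal cover $\rho \circ u$, the complex subspace $\tilde{\iota}(\Omega)$ corresponds to $\Lie(\Omega)$, while $\Lie(\rho(T(\R))^\circ)$ is the tangent space at $0$ of $u^{-1}(T(\R)) = \{z : \Re(z_j) \in \tfrac{1}{2}\Z\}$, namely $i\R^{2g}$. Since $\rho(T(\R))$ and $\Omega$ are closed Lie subgroups of $P(\C)$ whose intersection is discrete by hypothesis, their real Lie algebras meet only in $\{0\}$, giving the desired vanishing. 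I do not foresee a substantive obstacle beyond carefully book-keeping these identifications and verifying that the image $[0,1)^{2g} \subseteq \R^{2g}$ of $x$ and $R \subseteq \C^g$ of $\tilde{\pi}(z)$ really are the only constraints on $z$ in $F \cap G$.
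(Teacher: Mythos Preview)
Your proof is correct and follows essentially the same approach as the paper: both arguments identify the unbounded directions of $F$ and $G$ as $i\R^{2g}$ and $\tilde{\iota}(\Omega)$ respectively, and reduce boundedness of $F\cap G$ to the vanishing $\tilde{\iota}(\Omega)\cap\R^{2g}=\{0\}$ (equivalently $\tilde{\iota}(\Omega)\cap i\R^{2g}=\{0\}$), which is exactly the discreteness hypothesis read at the Lie-algebra level. The only cosmetic difference is packaging: the paper argues by contradiction---an unbounded semi-linear set contains a ray, and tracing the constraints on that ray produces a nonzero vector in $\tilde{\iota}(\Omega)\cap\R^{2g}$---whereas you proceed directly via the decomposition $z=x+iy$ and injectivity of $\tilde{\pi}\vert_{\R^{2g}}$.
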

\begin{proof}
    $F \cap G$ is evidently a semi-linear subset of $\C^{2g} = \R^{4g}$. Suppose for the sake of contradiction that $F \cap G$ is unbounded. Then $F\cap G$ must contain an infinite real ray. More precisely, there exists an $a, b \in \C^{2g}$ such that for all $t >0$, $a + t b \in F \cap G$ and $b \neq 0$. Set $l := \{a+tb | t \in \R_{>0}\}$. Suppose $a = (a_1,\ldots,a_{2g}), b = (b_1,\ldots,b_{2g})$, then since for every $j = 1,\ldots, 2g$ and for all $t > 0$, $\Re(a_j+tb_j) \in [0,1)$, we must have that for each $j$, $b_j$ is purely imaginary.  Thus there exists an element $c = (c_1,\ldots,c_{2g}),$ so that $c \in \R^{2g}\setminus\{0\}$ and $b= i\cdot c.$ Since $l \subseteq G$, $\tilde{\pi}(l) \subseteq R$ which is a bounded subset of $\C^g$. Thus, $\tilde{\pi}\vert_l$ cannot be injective on $l$. Therefore, there exist two points $(a+t_1 b), (a+t_2 b) \in l$, with $t_1 \neq t_2$ such that $\tilde{\pi}(a+t_1b) = \tilde{\pi}(a+t_2b)$. Thus, the difference of these two points, $(t_1-t_2)b \in \tilde{\iota}(\Omega).$ Since $\tilde{\iota}(\Omega)$ is a complex vector subspace of $\C^{2g}$, this implies $(t_1-t_2)c \in \Omega$. In particular, $\tilde{\iota}(\Omega) \cap (\R^{2g}\setminus \{0\}) \neq \varnothing$ and again since $\tilde{\iota}(\Omega)$ is a linear subspace $\tilde{\iota}(\Omega)$ contains a real line in $\R^{2g}$. But this contradicts the assumption that $\Omega \cap \rho(T(\R))$ is discrete in $P(\C)$.
\end{proof}

Let $U' := v^{-1}(S')$ and $U := u|_F^{-1}(S)$. Thus, $U'$ and $U$ are both definable in $\C^{2g}$ and $\phi(U) = U'$. 
Now consider the \emph{definable} set:
\[ \Gamma := \{ (a,a') \in U \times U' | \exists \epsilon > 0, \exists \vec{x} \in \R^{2g}, \left(\vec{x} + (B(a,\epsilon)\cap U) = B(a',\epsilon)\cap U'\right) \land (\vec{x}+a = a') \}.\] 

Note that $\Gamma$ contains the graph of the bijection $\phi\vert_U :U \rightarrow U'.$ 

We claim that there is a finite subset $\Delta \subseteq \R^{2g}$ such that for every $a \in U,$ and elements $a'_1, a'_2 \in U'$ such that $(a,a'_1) \in \Gamma$ and  $(a,a'_2) \in \Gamma$, we have that $(a'_1-a'_2) \in \Delta$. Indeed, for such two elements $a'_1, a'_2 \in U'$ as above, $(U + (a'_2 - a'_1))\cap U$ contains a relatively open subset of $U$. Thus $S\cdot u(a'_2-a'_1) \cap S$ contains a relatively open subset of $S$. Since $S$ is an irreducible algebraic variety, this implies that $S\cdot u(a'_2-a'_1) = S$ and thus $u(a'_2-a'_1)$ belongs to the finite group $G_T$. Since $a'_2-a'_1 \in (-1,1)^{2g}$, there are only finitely many choices for $a'_2-a'_1$ which proves the claim. 

Consider the definable map $f : \Gamma \rightarrow \R^{2g}$ that sends $(a,a') \mapsto (a'-a)$. Since $\Gamma$ contains the graph of $\phi : U \rightarrow U'$, by the previous paragraph, we must have that $f(\Gamma) \subseteq (\Z^{2g}+\Delta)$ which is a discrete subset of $\R^{2g}.$ But $f(\Gamma)$ is definable in $\R^{\an, \exp}$ which is o-minimal, and thus $f(\Gamma)$ must have only finitely many connected components. This is only possible when $f(\Gamma)$ is a finite set. In particular, there are only finitely many $\vec{n}\in \Z^{2g}$ such that $U \cap (G+\vec{n}) \neq \varnothing.$ In other words, $U \subseteq \bigcup_{\vec{n}} \left(F\cap(G+\vec{n})\right)$ where the union is over only \emph{finitely} many $\vec{n} \in \Z^{2g}.$
By \autoref{bounded-lemma}, this implies that $U$ is a bounded subset of $\C^{2g}.$  
Thus, the closed subset $S \subseteq (\C^*)^{2g}$ is also bounded and therefore compact. On the other hand, $S$ being a closed algebraic subvariety of the affine variety $(\C^*)^{2g}$, must itself be affine. $S$ now being an \emph{irreducible}, affine, compact complex algebraic variety must be reduced to a single point. 

\end{proof}

\section{A \emph{p}-adic version for Mumford curves}\label{p-adic-proof}

In this section, we shall prove \autoref{p-adic-theorem}. We start by proving two Lemmas that we shall need later in the proof.
\begin{remark}\label{etale-analytic-implies-etale-algebraic}
We first recall that for a map of affinoid spaces $f : Y=\Sp(B) \rightarrow X=\Sp(A)$ over $\C_p$, we say that $f$ is \'etale if for every $y \in Y$ the induced map on local rings $\mathcal{O}_{X,f(y)} \rightarrow \mathcal{O}_{Y,y}$ is flat and unramified in the sense of \cite[p. 240]{fresnel2004rigid}. This in fact implies that $A \rightarrow B$ is flat.  
This follows from the following three facts: First, to show that $A \rightarrow B$ is flat it is enough to localize at every maximal ideal of $B$ (more precisely \cite[Theorem 7.1]{matsumura1989commutative}). Secondly, for local homomorphisms of Noetherian local rings, flatness can be checked on max-adic completions \cite[Theorem 22.4, p. 176]{matsumura1989commutative}. Thirdly, for a maximal ideal $\mathfrak{m}$ of the affinoid algebra $A$, the max-adic completion of $A_\mathfrak{m}$ is isomorphic to the max-adic completion of the Noetherian local ring $\mathcal{O}_{\Sp(A),\mathfrak{m}}$ (\cite[\S 4.1, Proposition 2]{bosch2014lectures}).
We now make the following elementary observation that we shall need. Let $f : \Sp(B) \rightarrow \Sp(A)$ be a \emph{finite} \'etale map of affinoids with $A, B$ integral domains, and such that $f$ is a set-theoretic bijection. Then $f$ is an isomorphism. Indeed, from the above we see that $A\rightarrow B$ is a finite, flat ring map and thus $B$ is a finite, locally-free $A$-module and the fact that $f$ is a bijection implies that $\Spec(B)\rightarrow \Spec(A)$ has generic fiber cardinality $1$ and thus is an isomorphism.  

\end{remark}

\begin{lemma}\label{analytic-subgroups-of-cp^n}
Let $B = \Sp(\C_p\{t\})$ be the rigid-analytic affinoid closed unit disk, that is $B = \{t \in \C_p : |t| \leq 1\}$. We view $B$ as a group object in the category of rigid analytic spaces over $\C_p$. Suppose $H \hookrightarrow B^n$ is a closed rigid-analytic subgroup of $B^n$. Then $H$ is linear. More precisely, there is a $\C_p$-linear subspace $L$ of $\C_p^n$ such that $H = L \cap B^n$. 
\end{lemma}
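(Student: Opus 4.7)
The plan is to take $L := T_0 H$, viewed as a $\C_p$-linear subspace of $T_0 B^n = \C_p^n$, and prove the equality $H = L \cap B^n$ via two inclusions. First I would introduce the rigid analytic group homomorphism
\[\phi : H \hookrightarrow B^n \hookrightarrow \C_p^n \twoheadrightarrow V := \C_p^n/L,\]
whose differential at the origin is zero by the very definition of $L$.

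The main step is to show $\phi \equiv 0$ on $H$. Since $\phi$ is a homomorphism between additive groups, translation by any $h \in H$ is intertwined via $\phi$ with translation by $\phi(h)$ on $V$; taking differentials at the origin gives $d\phi_h = d\phi_0 = 0$ for every $h \in H$. Because $H$ is smooth (as a reduced group object in rigid analytic geometry over a characteristic-zero field), a rigid analytic map with identically zero derivative on a connected smooth space is locally constant. Thus $\phi$ is constant on the identity component $H^\circ$, and $\phi(0) = 0$ forces $\phi \equiv 0$ on $H^\circ$. To handle the non-identity components: $H$ is affinoid and hence Noetherian, so it has only finitely many irreducible components, making $H/H^\circ$ a finite group. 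The induced homomorphism $H/H^\circ \to V$ sends a finite group into the torsion-free $\C_p$-vector space $V$ and must therefore vanish. Consequently $\phi \equiv 0$ on all of $H$, which translates to the inclusion $H \subseteq L \cap B^n$ of closed analytic subspaces of $B^n$.

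For the reverse inclusion, $L \cap B^n$ is an irreducible, smooth closed analytic subspace of $B^n$ of dimension $d := \dim L = \dim_0 H$ (it is a polydisc in $L$, possibly with non-standard radii). Then $H^\circ$ is a closed irreducible analytic subspace of $L \cap B^n$ of the same dimension $d$; an irreducible closed analytic subspace of an irreducible analytic space of the same dimension must equal the whole space. Hence $H^\circ = L \cap B^n$, which combined with $H^\circ \subseteq H \subseteq L \cap B^n$ yields $H = L \cap B^n$, as required.

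The main obstacle is propagating the vanishing of the differential at the identity to the vanishing of $\phi$ on all of $H$. The essential ingredients are the "zero derivative implies locally constant" principle in rigid analytic geometry, which rests on the smoothness of $H$ coming from the characteristic-zero group structure, together with the finiteness of the component group $H/H^\circ$ from affinoid Noetherianity; the torsion-freeness of $V$ then takes care of the non-identity components.
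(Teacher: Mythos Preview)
Your argument is correct and takes a genuinely different route from the paper's. The paper proceeds by a completely elementary induction on $n$: in the base case it observes that for any nonzero $a \in H$ the set $\Z\cdot a$ accumulates at $0$, forcing $H$ to be positive-dimensional and hence all of $B$; for the inductive step it uses a $\gl_n(\OCp)$-change of coordinates to put a nonzero point of $H$ on the first axis, concludes $B\times\{0\}\subseteq H$ from the base case, and then splits $H = B\times\bigl(H\cap(\{0\}\times B^{n-1})\bigr)$. No smoothness, tangent spaces, or component-group considerations enter.

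Your approach is Lie-theoretic: you identify $L$ as $T_0H$, transport the vanishing of $d\phi$ from $0$ to every point by group translation, and invoke smoothness of $H$ (a Cartier-type statement for rigid analytic groups in characteristic zero) to conclude $\phi$ is locally constant, then use finiteness of $H/H^\circ$ and torsion-freeness of $V$ to get $\phi\equiv 0$. This is conceptually cleaner and generalises more readily (e.g.\ to closed subgroups of other smooth commutative rigid groups), at the cost of importing several structural facts about rigid analytic groups and spaces that the paper's bare-hands induction avoids. One small inaccuracy: $L\cap B^n$ need not literally be a polydisc in $L$---it is in general only a rational (polytope-shaped) affinoid subdomain of $L$---but its affinoid algebra $\C_p\{t_1,\ldots,t_n\}/(\text{linear forms for }L)$ is visibly a domain, so the irreducibility you need still holds and the equal-dimension argument goes through.
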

\begin{proof}
We prove this by induction on $n$. Let $H$ be as in the statement of the Lemma. Suppose $n=1$, and that $H \neq \{0\}$. For $a \in H\setminus\{0\}$, the set $\Z\cdot a$ is contained in $H$ and has $0$ as an accumulation point. This implies that $H$ cannot be zero-dimensional and thus $H = B$. This completes the base case.

Suppose now that $n \geq 2$ and that $H \neq \{0\}$. We may apply a linear automorphism of $B^n$ (i.e. an element of $\gl_n(\OCp)$) and replace $H$ by its image under such an automorphism to assume that $(a,0,\ldots,0) \in H$ for some $a \in \OCp$.

Now $H\cap (B \times \{\underline{0}\})$ is a closed rigid-analytic subgroup of  $B \times \{\underline{0}\}$ that is not trivial, and thus from the base case we must have $H\cap (B \times \{\underline{0}\}) = (B \times \{\underline{0}\})$. By the induction hypothesis, $H \cap (\{0\}\times B^{n-1})$ is a linear in $B^{n-1}$. Now the Lemma follows since $H =B \times \left(H \cap (\{0\}\times B^{n-1})\right)$
\end{proof}

\begin{lemma}\label{analytic-subgroups-of-gm^n}
Suppose $H \hookrightarrow (\Gm^n)^\an_{\C_p}$ is a torsion-free, closed rigid-analytic subgroup of $(\Gm^n)^\an_{\C_p}$. Then $H(\C_p)$ is discrete in $(\C_p^\times)^n$. Equivalently, the intersection of $H(\C_p)$ with every affinoid subdomain of $(\Gm^n)^\an_{\C_p}$ is finite. 
\end{lemma}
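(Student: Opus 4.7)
The plan is to linearize $H$ in a neighborhood of the identity via the $p$-adic logarithm, reduce to the additive setting handled in \autoref{analytic-subgroups-of-cp^n}, and then use torsion-freeness to force the resulting linear subspace to be trivial.

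Fix $0 < r < p^{-1/(p-1)}$ and consider the affinoid polydisk
\[
W := \{x \in (\Gm^n)^{\an}_{\C_p} : |x_i - 1| \leq r,\ i=1,\ldots,n\}
\]
together with the additive affinoid polydisk $D := \{y \in \C_p^n : |y_i| \leq r\}$. The ultrametric estimate $|x_iy_i - 1| \leq \max(|x_i-1|, |y_i-1|)$ shows $W$ is closed under multiplication and inversion, and the coordinatewise $p$-adic logarithm defines a rigid-analytic isomorphism of groups $\log : W \xrightarrow{\sim} D$ with inverse $\exp$. Since $H$ is closed in $(\Gm^n)^{\an}_{\C_p}$ and $W$ is stable under the group operation, $H \cap W$ is a closed rigid-analytic subgroup of $W$, and under $\log$ corresponds to a closed rigid-analytic subgroup of $D$. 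Rescaling $D$ by an element of $\C_p^\times$ of norm $r$ reduces us to the unit polydisk $B^n$ of \autoref{analytic-subgroups-of-cp^n}, which then yields $\log(H \cap W) = L \cap D$ for some $\C_p$-linear subspace $L \subseteq \C_p^n$.

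If $L = 0$ then $H \cap W = \{1\}$: the identity is isolated in $H(\C_p)$, and translating by arbitrary points of $H(\C_p)$ establishes discreteness throughout $(\C_p^\times)^n$, while finiteness on any affinoid subdomain then follows by quasi-compactness. The content of the lemma is therefore to rule out $L \neq 0$. Suppose for contradiction that $L \neq 0$. Then $H$ contains the positive-dimensional rigid-analytic subgroup $\Sigma := \exp(L \cap D) \subseteq W$. Let $T' \subseteq \Gm^n_{\C_p}$ denote the Zariski closure of $\Sigma(\C_p)$: this is the Zariski closure of a connected (hence Zariski-connected) subgroup, so a connected algebraic subgroup of $\Gm^n_{\C_p}$, i.e.\ a subtorus of dimension at least $1$, whose $\C_p$-points contain nontrivial torsion.

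It then suffices to show $T'^{\an} \subseteq H$, as this contradicts torsion-freeness of $H$ and completes the proof. This containment is the main obstacle. When $\dim_{\C_p}L = \dim T'$, the affinoid $\Sigma$ is an admissible open neighborhood of the identity inside the irreducible rigid-analytic space $T'^{\an}$, and the rigid-analytic identity principle applied to the defining equations of $H$ on $T'^{\an}$ forces $H \cap T'^{\an} = T'^{\an}$. In general, $\dim_{\C_p} L$ may be strictly smaller than $\dim T'$ (when $L$ is ``transcendental'' with respect to the character lattice of $\Gm^n$); handling this case plausibly requires the structural input that any positive-dimensional closed rigid-analytic subgroup of $(\Gm^n)^{\an}_{\C_p}$ has identity component coinciding with the analytification of an algebraic subtorus --- equivalently, that the tangent subspace $L$ of a closed rigid-analytic subgroup at the identity must be defined over $\Q\otimes_\Z \Hom(\Gm^n, \Gm)$. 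Either way, one places a positive-dimensional algebraic subtorus inside $H$, producing torsion and the required contradiction.
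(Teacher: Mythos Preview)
Your proposal has a genuine gap, and you have honestly flagged it yourself: the case $\dim_{\C_p} L < \dim T'$ is not handled. This is not a technicality. If $L \subset \C_p^n$ is a line spanned by a vector whose coordinates are $\Q$-linearly independent, then the Zariski closure of $\exp(L\cap D)$ is all of $\Gm^n$, and there is no way to conclude from $\Sigma \subseteq H$ that $(\Gm^n)^{\an} \subseteq H$. The ``structural input'' you invoke --- that the identity component of a closed rigid-analytic subgroup of $(\Gm^n)^{\an}$ is the analytification of an algebraic subtorus --- is strictly stronger than the lemma you are trying to prove, so appealing to it is circular. In short, the strategy of manufacturing torsion by Zariski-closing $\Sigma$ to an algebraic subtorus and then pulling that torus back into $H$ does not close.

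The paper's argument proceeds quite differently and avoids this obstruction entirely. Rather than trying to produce an algebraic subtorus inside $H$, it works on the \emph{open} unit polydisk $D$ around $1$, where $\log : D \to \C_p^n$ is no longer an isomorphism but an \'etale Galois cover with kernel the $p$-power roots of unity. After identifying the identity component $H_0$ of $H\cap D$ with the identity component $L_0$ of $\log^{-1}(L')$, torsion-freeness of $H$ forces the restriction $\log : L_0 \to L'$ to be \emph{injective}; surjectivity follows because the Galois group permutes the components of $\log^{-1}(L')$. A Baire-category argument on the \'etale cover then upgrades this bijection to a rigid-analytic isomorphism $L_0 \xrightarrow{\sim} L'$. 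The inverse gives a rigid-analytic map from the linear space $L'$ into $(\Gm^n)^{\an}$, and one finishes with the elementary fact that there are no nonconstant nowhere-vanishing rigid-analytic functions on a positive-dimensional $\C_p$-affine space. Thus the contradiction comes not from finding torsion inside $H$, but from the impossibility of embedding affine space analytically into $\Gm^n$ --- an obstruction that is insensitive to whether $L$ is rational over the character lattice.
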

\begin{proof}
Let $D := \{(z_1,\ldots,z_n) \in (\C_p^\times)^n \mid \forall j\in\{1,\ldots,n\}, |z_j-1| < 1\}$ be the open unit polydisk around $1$. Note that $D$ is an admissible open in $(\Gm^n)^\an_{\C_p}$ and the logarithm map $\log : D \rightarrow \C_p^n$ that sends $(z_1,\ldots,z_n) \mapsto (\log(z_1),\ldots,\log(z_n))$ is an \'etale, Galois covering of rigid-analytic group varieties with kernel being the $n$-fold product of the group of $p$-power roots of unity. 

Let $E$ be a sufficiently small closed affinoid disk contained in $D$ and around $1$, such that the restriction $\log\vert_E : E \rightarrow \C_p^n$ is a rigid-analytic isomorphism onto the image $E' := \log(E)$ which is a closed affinoid disk in $\C_p^n$ containing $0$. We note that $E$ is a multiplicative subgroup of $D$. Thus, $H \cap E$ is a closed rigid-analytic multiplicative subgroup of the disk $E$, and so $\log(H\cap E)$ is a closed rigid-analytic additive subgroup of the affinoid disk $E'$ in $\C_p^n$ around $0$. By \autoref{analytic-subgroups-of-cp^n}, we get that there is a linear subspace $L' \subseteq \C_p^n$ such that $\log(H \cap E) = L'\cap E'$. Let $L := \log^{-1}(L')$ so that $L$ is a closed rigid-analytic multiplicative subgroup of $D$ and such that $H \cap E \subseteq L \cap E$. Let $H_0$ be the identity rigid-analytic connected component of $H \cap D$ and let $L_0$ similarly be the identity component of $L$. Since $H_0$ and $L_0$ are both analytic group varieties, they must also be analytically irreducible. Since $H \cap E = H_0 \cap E \subseteq L_0\cap E$, the irreducibility of $H_0$ implies that $H_0 \subseteq L_0$. On the other hand, $L_0$ and $H_0$ have the same dimension and $L_0$ being irreducible implies that $H_0 = L_0$. 

We shall now prove below that $L_0$ cannot be torsion-free unless it is the trivial subgroup of $D$. Indeed, since the cover $\log : D \rightarrow \C_p^n$ is Galois, we see that all the irreducible components of $L = \log^{-1}(L')$ are in fact connected and are permuted by the Galois group of the cover. Thus, the map $L_0 \rightarrow L'$ is a \emph{surjective} rigid-analytic homomorphism. 
Furthermore, the fact that $L_0$ is torsion-free implies that the map $L_0 \rightarrow L'$ is also injective, and thus a bijection. 
The bijection $L_0 \rightarrow L'$ is the composition of the open and closed immersion $L_0 \hookrightarrow L$ with the \'etale, Galois surjection $L \rightarrow L'$. 
Consider an admissible cover of $L'$ by \emph{connected} rigid-analytic affinoid subdomains $L' = \cup_i \Sp(A_i)$ such that the preimage of each $\Sp(A_i)$ in $L$ is a disjoint union of countably many \emph{connected} affinoids $\coprod_{n \in I_i} \Sp(B_{n,i})$ with each $B_{n,i}$ being finite \'etale over $A_i$. We note that the index sets $I_i$ are at most countable.
The existence of such a cover follows from the fact that there exists such a cover for $\log : D \rightarrow \C_p^n$ (see for instance \cite{dejong1995etale}).

Since each $\Sp(B_{n,i})$ is connected, $\Sp(B_{n,i})\cap L_0 \neq \varnothing$ if and only if $\Sp(B_{n,i})\subseteq L_0$. For each $i$, let $J_i \subseteq I_i$ be the subset such that $n \in J_i$ iff $\Sp(B_{n,i}) \subseteq L_0$.
Then we note that \[\Sp(A_i) = \coprod_{n \in J_i} \left(\text{image of } (\Sp(B_{n,i}) \rightarrow \Sp(A_i))\right).\]
The union on the right hand side above is an at most countable disjoint union of closed analytic subsets of $\Sp(A_i)$ which by the Baire category theorem is only possible if the union on the right hand side is over a singleton set $J_i$. Thus, for each $A_i$, there is a \emph{unique} $n(i) \in I_i$ such that $\Sp(B_{n(i),i}) \cap L_0 \neq \varnothing$ and in this case $\Sp(B_{n(i),i}) \subseteq L_0$ and thus $\Sp(B_{n(i),i})\rightarrow \Sp(A_i)$ being a finite, \'etale bijection must be an isomorphism (\autoref{etale-analytic-implies-etale-algebraic}). The union $\cup_{i} \Sp(B_{n(i),i})$ is an admissible affinoid covering of $L_0$ and therefore this implies that the map $L_0 \rightarrow L'$ is an isomorphism of rigid-analytic groups. 

Let $\phi : L' \rightarrow L_0$ denote the inverse of this map. Then $L' \xrightarrow{\phi} L_0 \rightarrow H_0 \hookrightarrow (\Gmcp^{n})^\an$ is a rigid analytic map from the linear space $L'$ to $(\Gmcp^{n})^\an$. But a Newton polygon argument implies that there are no non-constant global non-vanishing rigid analytic functions on a positive-dimensional linear space $L'$ (see \cite[Lemma 4.1.1-(2)]{conrad1999irreducible}).  
Thus, $L'$, $L_0$ and $H_0$ are all singleton sets. 
This concludes the proof that $H$ is discrete.
 
\end{proof}

We now turn to the proof of \autoref{p-adic-theorem}. We recall the setup and notation from \autoref{introduction}. We start with a Schottky group $\Gamma \subset \pgl_2(\C_p)$ which is a free group of rank $g \geq 1$. Denote by $\mathcal{L}_\Gamma \subseteq \mathbb{P}^1(\C_p)$ the limit set of the Schottky group $\Gamma$ and let $\Omega_\Gamma \subseteq \mathbb{P}^1(\C_p)$ denote the complement of $\mathcal{L}_\Gamma$. The rigid analytic quotient, $\Omega_\Gamma/\Gamma$ is a smooth, proper $\C_p$-analytic space and thus uniquely the analytic space attached to a genus $g$ smooth, projective curve over $\C_p$ that we denote by $X$. Thus $X^\an \cong \Omega_\Gamma/\Gamma$. 

As in the Introduction - \autoref{introduction}, we let $\mb(X) = \Hom(\pi_1^\topo(X^\an), \C_p^\times) = \Hom(\Gamma, \C_p^\times)$ be the moduli space of topological rank one $\C_p$-local systems on $X^\an$ and $\mdr(X)$ be the moduli space of algebraic line bundles equipped with connection on $X$. Both $\mb(X)$ and $\mdr(X)$ have a natural structure as algebraic groups over $\C_p$. The Riemann--Hilbert functor discussed in \autoref{introduction}, provides an injective rigid-analytic group homomorphism $\mb(X)^\an \hookrightarrow  \mdr(X)^\an$ (\cite[p.\,143]{andre1998p}). We note that $\mb(X) = \Spec(\C_p[\Gamma])$ is an affine algebraic torus of dimension $g$ while $\mdr(X)$ is the universal vector extension of the Jacobian sitting in the exact sequence: \[0\rightarrow H^0(X,\Omega^1_X)\rightarrow \mdr(X)\rightarrow \Pic^0(X)\rightarrow 0.\]
\begin{remark}
In \cite[p.\,143]{andre1998p}, Andr\'e asks the question of whether the injective analytic map $\mb(X)^\an \hookrightarrow \mdr(X)^\an$ is a closed immersion or not. However, we argue that at least for Mumford curves and in the rank one case, it is not a closed immersion for the following group-theoretic reason. Suppose that $\mb(X)^\an \hookrightarrow \mdr(X)^\an$ was indeed a closed immersion. Then $H^0(X,\Omega^1_X)\cap \mb(X) \hookrightarrow H^0(X,\Omega^1_X)$ is a closed analytic subgroup and so is the inclusion $H^0(X,\Omega^1_X)\cap \mb(X) \hookrightarrow \mb(X)^\an$. By \autoref{analytic-subgroups-of-gm^n}, we see that on the one hand $H^0(X,\Omega^1_X)\cap \mb(X)$ being torsion-free and a closed analytic subgroup of $\mb$ is discrete in $\mb$ and thus at most countable. On the other hand, $\mb(X)\cap H^0(X,\Omega^1_X)$ is closed-analytic in $H^0(X,\Omega^1_X)$ and so by \autoref{analytic-subgroups-of-cp^n} must be a linear subspace and thus uncountable or singleton. This tells us then that $H^0(X,\Omega^1_X)\cap \mb(X)$ is trivial. Therefore, the composite map $\mb(X)^\an \rightarrow \Pic^0(X)^\an$ is an injective rigid-analytic group homomorphism. But this is evidently not possible.
\end{remark} 
We aim to characterise algebraic subvarieties of $\mdr(X)$ that intersect $\mb(X)^\an$ in an irreducible algebraic subvariety of $\mb(X)$. We shall prove the following slightly more general claim.

\begin{proposition}\label{p-adic-slightly-general-prop}
    Let $T$ be an affine algebraic torus over $\C_p$ of dimension $g$. Let $P$ be an algebraic group over $\C_p$ that is the extension of an abelian variety $B$ over $\C_p$ by a vector group $V$. Thus, we have the following exact sequence of commutative algebraic groups over $\C_p$: \[0 \rightarrow V \xrightarrow{i} P(\C_p) \xrightarrow{\pi} B(\C_p) \rightarrow 0.\] Suppose $\rho : T^\an \hookrightarrow P^\an$ is an injective rigid-analytic group homomorphism such that $\pi \circ \rho : T^\an \rightarrow B^\an$ is surjective on $\C_p$-points. Suppose $S' \subseteq P(\C_p)$ is an algebraic subvariety of $P$ such that $S:=\rho^{-1}(S')$ is an irreducible algebraic subvariety of $T$. Then $S$ must be a translate of a subtorus of $T$. 
\end{proposition}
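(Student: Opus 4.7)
The plan is to mimic the proof of the complex-case \autoref{bi-algebraicity-prop}, inducting on $g = \dim T$. The base case $g = 0$ is trivial. For the inductive step, translate so that $S$ contains the identity of $T$ and $S'$ contains the identity of $P$, and replace $S'$ with its Zariski closure $\overline{\rho(S)}$ in $P$; this preserves the hypothesis $\rho^{-1}(S') = S$ by injectivity of $\rho$, and makes $S'$ irreducible. Let $G_T$ and $G_P$ be the algebraic stabilizers of $S$ and $S'$. Since $\rho(S) = S' \cap \rho(T^\an)$ is Zariski dense in the irreducible $S'$, a standard density argument gives $G_T = \rho^{-1}(G_P)$.

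If $G_T$ is positive-dimensional, the identity components satisfy $\rho(G_T^\circ) \subseteq G_P^\circ$, so $\rho$ descends to an injective analytic homomorphism $\bar\rho : (T/G_T^\circ)^\an \hookrightarrow (P/G_P^\circ)^\an$. The quotient $T/G_T^\circ$ is again an affine torus; $P/G_P^\circ$ is still an extension of the abelian variety $B/\pi(G_P^\circ)$ by a vector group; and $\bar\pi \circ \bar\rho$ remains surjective on $\C_p$-points. The inductive hypothesis then yields the conclusion for the image of $S$, and therefore for $S$ itself.

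The remaining case is $G_T$ finite, where the aim is to show $S = \{0\}$. The key analytic input is that $\alpha := \pi \circ \rho : T^\an \to B^\an$ has kernel $\rho^{-1}(V)$, which is torsion-free (since $V$ is) and closed analytic in $T^\an \simeq (\Gm^g)^\an$; by \autoref{analytic-subgroups-of-gm^n} it is discrete. Thus $\alpha$ realizes $B$ as the rigid-analytic quotient of $T$ by a discrete lattice $K := \rho^{-1}(V)$. Set $R := \overline{\alpha(S)} \subseteq B$; since $\alpha|_S$ has discrete fibers, $\dim R = \dim S$. The extreme cases are easy: if $\dim R = g$, then $R = B$ and $S = T$, forcing $G_T = T$ and contradicting its finiteness; and if $\dim R = 0$, then $R = \{0\}$ by irreducibility, whence $S \subseteq K$ is contained in a discrete set, so $S = \{0\}$.

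The main obstacle is ruling out the intermediate range $0 < \dim R < g$. The plan is to show that $R$ must be a translate of an abelian subvariety of $B$: once this is known, $\alpha^{-1}(R)$ decomposes as a disjoint union of translates of an algebraic subtorus $T_0 \subseteq T$ of dimension $\dim R$, using the fact that every closed connected analytic subgroup of $(\Gm^g)^\an$ is an algebraic subtorus (an extension of the argument of \autoref{analytic-subgroups-of-gm^n}). The irreducibility of $S$ together with $\dim S = \dim T_0$ then forces $S = T_0 + t_0$ for some $t_0$, giving $G_T \supseteq T_0$ of positive dimension and contradicting the case assumption. Establishing that $R$ is a coset of an abelian subvariety is the chief technical difficulty; this is where the extra algebraic rigidity of $S' \subseteq P$ beyond $R \subseteq B$ enters, via the observation that $\pi|_{S'} : S' \to R$ is generically finite (since $\dim S' = \dim R$) and its finite fibers interact constrainedly with the structured (non-discrete) image $\rho(K) \subseteq V$. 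Careful application of \autoref{analytic-subgroups-of-cp^n} and \autoref{analytic-subgroups-of-gm^n} then forces the stabilizer of $R$ in $B$ to be positive-dimensional, yielding the desired coset structure. This step is the $p$-adic counterpart of the o-minimal boundedness argument in the complex Case 2.
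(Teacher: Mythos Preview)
Your reduction to trivial (or finite) stabilizers is fine and parallels the paper's, though the paper performs a single quotient by the full stabilizers rather than inducting on identity components. The substantive divergence is in the finite-stabilizer case, and there your argument has a genuine gap.

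The paper's proof in that case is a \emph{tameness} argument: it uses that $\ker(\pi\circ\rho)$ is a full-rank lattice in $T$, chooses a bounded fundamental domain $F\subset T(\C_p)$, endows $P(\C_p)$ with its rigid-subanalytic structure, and builds a subanalytic set $D$ together with a subanalytic map $g:D\to T(\C_p)$ whose image is forced to lie in the discrete lattice $\ker(\pi\circ\rho)$. Rigid-subanalytic tameness then gives $\#g(D)<\infty$, so $S$ lies in finitely many translates of the bounded $F$, hence $S$ is bounded in $T$ and therefore a point. This is precisely the $p$-adic analogue of the o-minimal Case~2 argument, and it is where the algebraicity of $S'$ in $P$ is actually used (to make the relevant sets subanalytic on the $P$-side).

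Your proposed substitute---showing that $R=\overline{\alpha(S)}\subseteq B$ is a coset of an abelian subvariety---is not carried out. The phrase ``careful application of \autoref{analytic-subgroups-of-cp^n} and \autoref{analytic-subgroups-of-gm^n}'' is not an argument: those lemmas classify analytic \emph{subgroups}, not arbitrary subvarieties, and $R$ is a priori just an arbitrary irreducible subvariety of $B$. Note too that $R$ is \emph{always} algebraic (it is a Zariski closure), so its algebraicity imposes no constraint; the content has to come from the algebraicity of $S'$ in $P$, and you have not explained how. Your remark that $\pi|_{S'}:S'\to R$ is generically finite because $\dim S'=\dim R$ is also unjustified: you only know $\dim S'\ge\dim S=\dim R$, since $S'$ is the Zariski closure of the analytic set $\rho(S)$ and may well have strictly larger dimension. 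Finally, even granting that $R$ is a coset, the assertion that every closed connected rigid-analytic subgroup of $(\Gm^g)^\an_{\C_p}$ is an algebraic subtorus is not an immediate ``extension'' of \autoref{analytic-subgroups-of-gm^n} and would itself require proof. In short, the step you flag as ``the chief technical difficulty'' is the entire content of the theorem in this case, and as written it is a restatement of the goal rather than a proof; the paper resolves it by the rigid-subanalytic boundedness argument, which your proposal bypasses without supplying a replacement.
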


\begin{remark}
We remark that \autoref{p-adic-slightly-general-prop} implies \autoref{p-adic-theorem}. To see this, it suffices to note that for a Mumford curve $X$, the composite map $\mb(X)^\an \hookrightarrow \mdr(X)^\an \rightarrow \Pic^0(X)^\an$ is surjective on $\C_p$ points (see \cite[Proposition on p. 184]{gerritzen1980schottky}) and thus the hypotheses to be able to apply \autoref{p-adic-slightly-general-prop} hold. 
\end{remark} 

\begin{proof}[Proof of \autoref{p-adic-slightly-general-prop}]

Let $S'$ and $S$ be as in the statement of \autoref{p-adic-slightly-general-prop}. 
We may replace $S'$ by $\overline{\rho(S)}^\mathrm{Zar}$, the Zariski closure of the image of $S$ in $P(\C_p)$, and assume from now on that $S' = \overline{\rho(S)}^\mathrm{Zar}$.
Let $G_P := \{g \in P(\C_p) : g\cdot S' = S'\}$ and $G_T := \{g \in T(\C_p) : g\cdot S = S\}$ denote the stabilizers of $S'$ and $S$ respectively. $G_P$ (resp. $G_T$) is an algebraic subgroup of $P(\C_p)$ (resp. $T(\C_p)$). 

We note that $\rho^{-1}(G_P) = G_T$ since $\rho^{-1}(S') = S$.
Set $P' := P/G_P$, and $T' := T/G_T$. The map $\rho$ descends to an injective rigid-analytic group homomorphism $\rho' : T'^\an \hookrightarrow P'^\an$. 
Let $V' := V/(V\cap G_P)$ and $B' := B/\pi(G_P)$ so that $V'$ is a vector group and $B'$ is an abelian variety over $\C_p$. 
We have the exact sequence of commutative algebraic groups over $\C_p$: \[0\rightarrow V' \xrightarrow{i'} P'(\C_p) \xrightarrow{\pi'} B'(\C_p)\rightarrow 0.\]
Evidently, $\pi' \circ \rho' : T'(\C_p) \rightarrow B'(\C_p)$ is still surjective.
$S'$ is the inverse image in $P(\C_p)$ of an algebraic subvariety (say $S'_0$) of $P'(\C_p)$ and similarly $S$ is the inverse image in $T(\C_p)$ of an irreducible algebraic subvariety (say $S_0$) of $T'(\C_p)$. It is easy to see that $(\rho')^{-1}(S'_0) = S_0$ and moreover that $\overline{\rho'(S_0)}^\mathrm{Zar} = S'_0$. 

We also see that the stabilizers of $S'_0$ in $P'(\C_p)$ and that of $S_0$ in $T'(\C_p)$ are both trivial. 
Furthermore, if we are able to prove that $S_0$ is a translate of a subtorus of $T'$, then $S$ being the pre-image of $S_0$ in $T$ shall also be a translate of a subtorus. We may thus replace $T, P, V, B, \rho, S, S'$ by $T', P', V', B', \rho', S_0, S'_0$.

\emph{We may therefore assume from this point onwards that the stabilizers $G_T$ and $G_P$ are both trivial}.

Consider the map $i \oplus \rho : V^\an \times T^\an \rightarrow P^\an$. 
For simplicity, let us call this map $\phi : V^\an \times T^\an \rightarrow P^\an$. 
Since $\pi \circ\rho : T(\C_p) \rightarrow B(\C_p)$ is surjective, we find that $\phi$ is also surjective. 
For notational convenience, denote the composite map $\pi\circ \rho : T(\C_p) \rightarrow B(\C_p)$ by $q : T(\C_p) \rightarrow B(\C_p)$ instead.
The kernel of $q : T(\C_p) \twoheadrightarrow B(\C_p)$ is $V \cap T(\C_p)$. Since $V$ is a vector group that is a closed analytic subgroup of $P$, the kernel of $q$, $\ker(q)$, is a \emph{torsion-free} closed analytic subgroup of $T^\an$. By \autoref{analytic-subgroups-of-gm^n}, $\ker(q)$ is therefore a discrete subgroup of $T^\an$. The quotient of $T$ by the discrete subgroup $\ker(q)$ is proper since $B$ is an abelian variety. This implies that $\ker(q)$ is a full-rank lattice in $T^\an$ (\cite[Proposition 2.7.3]{lutkebohmert2016rigid}). 
We let $F \subset T(\C_p)$ denote a connected bounded fundamental domain for the lattice $\ker(q)$ acting on $T(\C_p)$. 

We now impose on $P(\C_p)$ the unique rigid-subanalytic structure that extends its algebraic structure. 
The map $\phi\vert_{V \times F} : V \times F \rightarrow P(\C_p)$ is a rigid-subanalytic bijection (which follows from the fact that $F$ is bounded and hence that $q\vert_F : F \rightarrow B(\C_p)$ is a rigid-subanalytic bijection). Let $S'_1$ be the rigid-subanalytic set given by $S'_1 := \phi^{-1}(S')\cap (V \times F)$. For $v \in V$, we denote by $(S'_1)_v$ the rigid-subanalytic subset of $F$ defined by $\{f \in F : (v,f) \in S'_1\}.$ 

Define \[D := \{(v,f,s) \in V \times F \times S : ((v,f) \in S'_1)\land \left(\exists \epsilon > 0 | f\cdot S\cap B(s,\epsilon) = s\cdot (S'_1)_v \cap B(f, \epsilon)\right)\}.\] Note that $D$ is rigid-subanalytic and contains $\{(v,f,s) \in V \times F \times S : \phi(v,f) = \rho(s)\}$. Furthermore, for a point $(v,f,s)\in D$, $(s^{-1}f)\cdot S\cap B(s,\epsilon) \subseteq (S'_1)_v$ implies that $\rho(S\cap B(s,\epsilon)) \subseteq S'-i(v)-\rho(s^{-1}f)$ (where the $+$ here denotes to the group operation of $P(\C_p)$). Since $S$ is analytically irreducible, this implies that $\rho(S) \subseteq S'-i(v)-\rho(s^{-1}f)$ and thus that $S'=\overline{\rho(S)}^\mathrm{Zar}\subseteq S'-i(v)-\rho(s^{-1}f).$ In other words, $i(v)+\rho(s^{-1}f)$ is in the stabilizer $G_P$ of $S'$ in $P(\C_p)$.  
However, we remind ourselves that the stabilizer is $G_P$ is trivial. Therefore, $i(v) + \rho(s^{-1}f) = 0 $ in $P(\C_p)$ which implies that $s^{-1}f \in \rho^{-1}(i(V)) = \ker(q)$ which we recall is a discrete lattice in $T(\C_p).$

Thus we have proved that the map $g : D \rightarrow T(\C_p)$ that sends $(v,f,s) \in D$ to $s^{-1}f$ has image that is contained in the discrete lattice $\ker(q)$. On the other hand, this map is evidently rigid-subanalytic. 
This implies that the image $g(D)$ is \emph{finite}. 
For every $s_0 \in S$, there is some $(v_0,f_0) \in V\times F$ such that $\phi(v_0,f_0) = \rho(s_0)$ (so that $(v_0,f_0,s_0) \in D$), and thus $s_0^{-1}f_0$ belongs to $g(D)$ and so has only finitely many choices. This implies that $S$ is contained in only finitely many translates of the bounded fundamental domain $F$, and hence that $S \subseteq T(\C_p)$ is bounded. An irreducible algebraic subvariety $S$ of $T$ is bounded in $T$ if and only if $S=\{1\}$ (this follows for instance using Noether normalization to produce a finite surjective morphism from $S$ to affine space of the same dimension. This produces an example of a global regular function on $S$ that is unbounded if $S$ is positive-dimensional. On the other hand if $S$ is bounded in $T(\C_p)$, every regular function on $S$ would also be bounded). 

This completes the proof of the Proposition.
\end{proof}

\bibliographystyle{alpha}
\bibliography{main}

\textsc{California Institute of Technology, Pasadena, California}

\textit{E-mail address}: \texttt{abhishek@caltech.edu}
\end{document}